\documentclass[12pt,english,a4paper]{smfart}

\usepackage[T1]{fontenc}
\usepackage{lmodern}
\usepackage{smfthm}
\usepackage[headings]{fullpage}
\usepackage{amssymb}

\newcommand{\Gal}{\operatorname{Gal}}
\newcommand{\Qp}{\mathbf{Q}_p}
\newcommand{\Qpbar}{\overline{\mathbf{Q}}_p}
\newcommand{\Qpunr}{\mathbf{Q}_p^\mathrm{unr}}
\newcommand{\Cp}{\mathbf{C}_p}

\newcommand{\ZZ}{\mathbf{Z}}

\newcommand{\OO}{\mathcal{O}}

\newcommand{\Card}{\operatorname{Card}}
\newcommand{\Aut}{\operatorname{Aut}}
\newcommand{\val}{\operatorname{val}}
\newcommand{\bigO}{\operatorname{O}}
\newcommand{\dcroc}[1]{[\![ #1 ]\!]}
\newcommand{\dpar}[1]{(\!( #1 )\!)}
\newcommand{\efont}{\mathbf{E}}
\newcommand{\afont}{\mathbf{A}}
\newcommand{\bfont}{\mathbf{B}}
\newcommand{\et}{\widetilde{\mathbf{E}}}
\newcommand{\at}{\tilde{\mathbf{A}}}
\newcommand{\bt}{\widetilde{\mathbf{B}}}
\newcommand{\btrig}[2]{\widetilde{\mathbf{B}}^{\dagger #1}_{\mathrm{rig} #2}}
\newcommand{\bnrig}[2]{\mathbf{B}^{\dagger #1}_{\mathrm{rig} #2}}
\newcommand{\LT}{\mathrm{LT}}
\renewcommand{\geq}{\geqslant}
\renewcommand{\leq}{\leqslant} 
\renewcommand{\phi}{\varphi} 

\author{Laurent Berger}
\address{UMPA de l'ENS de Lyon \\
UMR 5669 du CNRS}
\email{laurent.berger@ens-lyon.fr}
\urladdr{perso.ens-lyon.fr/laurent.berger/}

\title[The perfectoid commutant of Lubin-Tate power series]{The perfectoid commutant \\ of Lubin-Tate power series}

\date{\today}

\begin{document}

\begin{abstract}
Let $\LT$ be a Lubin-Tate formal group attached to a finite extension of $\Qp$. By a theorem of Lubin-Sarkis, an invertible characteristic $p$ power series that commutes with the elements $\Aut(\LT)$ is itself in $\Aut(\LT)$. We extend this result to perfectoid power series, by lifting such a power series to characteristic zero and using the theory of locally analytic vectors in certain rings of $p$-adic periods. This allows us to recover the field of norms of the Lubin-Tate extension from its completed perfection.
\end{abstract}

\subjclass{11S; 12J; 13J}

\keywords{Lubin-Tate group; field of norms; $p$-adic period; locally analytic vector; $p$-adic dynamical system; perfectoid field}

\maketitle

\setlength{\baselineskip}{18pt}
\section*{Introduction}

Let $F$ be a finite extension of $\Qp$, with ring of integers $\OO_F$ and residue field $k$. Let $q= \Card(k)$ and let $\pi$ be a uniformizer of $\OO_F$. Let $\LT$ be the Lubin-Tate formal $\OO_F$-module attached to $\pi$. Let $F_\infty = F(\LT[\pi^\infty])$ denote the extension of $F$ generated by the torsion points of $\LT$, and let $\Gamma_F = \Gal(F_\infty/F)$. The Lubin-Tate character $\chi_\pi$ gives rise to an isomorphism $\chi_\pi : \Gamma_F \to \OO_F^\times$.

The field of norms (\cite{W83}) $\efont_F$ of the extension $F_\infty/F$ is a local field of characteristic $p$, endowed with an action of $\Gamma_F$, that can be explicitly described as follows. We choose a coordinate $T$ on $\LT$, so that for each $a \in \OO_F$ we get a power series $[a](T) \in \OO_F \dcroc{T}$. We then have $\efont_F = k \dpar{ Y }$, on which $\Gamma_F$ acts via the formula $\gamma( f(Y)) = f( [\chi_\pi(\gamma)](Y))$. In $p$-adic Hodge theory, we consider the field $\et_F$, which is the $Y$-adic completion of the maximal purely inseparable extension $\cup_{n \geq 0} \efont_F^{q^{-n}}$ of $\efont_F$ inside an algebraic closure. The action of $\Gamma_F$ extends to the field $\et_F$. If $f \in \et_F$ and $\gamma \in \Gamma_F$, we still have  $\gamma( f(Y)) = f( [\chi_\pi(\gamma)](Y))$. The question that motivated this paper is the following.

\begin{enonce*}{Question} 
Can we recover $\efont_F$ from the data of the valued field $\et_F$ endowed with the action of $\Gamma_F$?
\end{enonce*}

If $a \in \OO_F^\times$, then $u(Y) = [a](Y)$ is an element of $\efont_F$ of valuation $1$ that satisfies the functional equation  $u \circ [g](Y)  = [g] \circ u(Y)$ for all $g \in \OO_F^\times$. Conversely, we prove the following theorem, which answers the question, as it allows us to find a uniformizer of $\efont_F$ from the data of the valued field $\et_F$ endowed with the action of $\Gamma_F$.

\begin{enonce*}{Theorem A}
If $u \in \et_F$ is such that $\val_Y(u)=1$ and $u \circ [g] = [g] \circ u$ for all $g \in \OO_F^\times$, then there exists $a \in \OO_F^\times$ such that $u(Y) = [a](Y)$.
\end{enonce*}

In particular, $\efont_F = k \dpar{u}$ for any $u$ as in theorem A. The main difficulty in the proof of theorem A is to prove that if $u$ is as in the statement of theorem A, then there exists $n \geq 0$ such that $u \in \efont_F^{q^{-n}}$. If $F=\Qp$ and $\pi=p$, namely in the cyclotomic situation, this follows from the main result of \cite{BR22}. However, a crucial ingredient in that paper does not generalize to $F \neq \Qp$. In order to go beyond the cyclotomic case, we instead use a result of Colmez (\cite{C02}) to lift $u$ to an element $\hat{u}$ of a ring $\at_F^+$ (the Witt vectors over the ring of integers of $\et_F$, as well as a completion of $\cup_{n \geq 0} \phi_q^{-n}(\OO_F \dcroc{\widehat{Y}})$, where $\phi_q(\widehat{Y}) = [\pi](\widehat{Y})$), that will satisfy a similar functional equation. In particular, $\hat{u}$ is a locally analytic element of a suitable ring of $p$-adic periods. By previous results of the author (\cite{B16}), $\hat{u}$ belongs to $\phi_q^{-n}(\OO_F \dcroc{\widehat{Y}})$ for a certain $n$. This allows us to prove that there exists $n \geq 0$ such that $u \in \efont_F^{q^{-n}}$. By replacing $u$ with $u^{p^k}$ for a well chosen $k$, we are led to the study of elements of $Y \cdot k \dcroc{Y}$ under composition. We prove that $u$ is invertible for composition, and to conclude we use a theorem of Lubin-Sarkis (\cite{LS07}) saying that if an invertible series commutes with a nontorsion element of $\Aut(\LT)$, then that series is itself in $\Aut(\LT)$. We finish this paper with an explanation of why the ``Tate traces'' on $\et_F$ used in \cite{BR22} don't exist if $F \neq \Qp$.

\section{Locally analytic vectors}

We use the notation that was introduced in the introduction. In order to apply lemma 9.3 of \cite{C02}, we assume that the coordinate $T$ on $\LT$ is chosen such that $[\pi](T)$ is a monic polynomial of degree $q$ (for example, we could ask that $[\pi](T)=T^q+\pi T$). 

Let $F_0 = \Qpunr \cap F$. Let $\et_F^+$ denote the ring of integers of $\et_F$ and let $\at^+_F = \OO_F \otimes_{\OO_{F_0}} W(\et_F^+)$ be the $\OO_F$-Witt vectors over $\et^+_F$.

\begin{prop}
\label{colift}
If $u \in \et_F^+$ is such that $\gamma(u)  = [\chi_\pi(\gamma)] (u)$ for all $\gamma \in \Gamma_F$, then $u$ has a lift $\hat{u} \in \at^+_F$ such that $\gamma(\hat{u})   = [\chi_\pi(\gamma)] \circ \hat{u}$ for all $\gamma \in \Gamma_F$.
\end{prop}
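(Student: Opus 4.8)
Here is how I would prove Proposition~\ref{colift}.

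The plan is to construct $\hat u$ by successive approximation along the powers of $\pi$, the only substantial input being a twisted cohomological vanishing that one extracts from Lemma~9.3 of \cite{C02} (this is where we use that $T$ has been chosen so that $[\pi](T)$ is monic of degree $q$). Recall that reduction modulo $\pi$ is a $\Gamma_F$-equivariant surjection $\at^+_F\to\et^+_F$ with kernel $\pi\at^+_F$, and that $\at^+_F$ is $\pi$-adically complete and separated. First note that the functional equation presupposes that $[\chi_\pi(\gamma)](u)$ converges, which forces $u\in\MM_{\et^+_F}$ (equivalently $\val_Y(u)>0$), so $u$ is topologically nilpotent; as first approximation I take a Teichm\"uller lift $u_0\in W(\et^+_F)\subseteq\at^+_F$ of $u$, which is topologically nilpotent and reduces to $u$ modulo $\pi$. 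For $a\in\OO_F$ the series $[a](u_0)$ then converges in $\at^+_F$ and reduces to $[a](u)$, so $E_0(\gamma):=\gamma(u_0)-[\chi_\pi(\gamma)](u_0)$ reduces modulo $\pi$ to $\gamma(u)-[\chi_\pi(\gamma)](u)=0$ and hence lies in $\pi\at^+_F$, for every $\gamma\in\Gamma_F$.

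Now I run the induction. Suppose $u_n\in\at^+_F$ is topologically nilpotent, reduces to $u$, and satisfies $E_n(\gamma):=\gamma(u_n)-[\chi_\pi(\gamma)](u_n)\in\pi^{n+1}\at^+_F$ for all $\gamma$; I look for $v\in\at^+_F$ such that $u_{n+1}:=u_n+\pi^{n+1}v$ satisfies the analogous congruence modulo $\pi^{n+2}$. Set $\theta_\gamma:=[\chi_\pi(\gamma)]'(u)$. Taylor expanding $[\chi_\pi(\gamma)](u_n+\pi^{n+1}v)$, whose higher order terms lie in $\pi^{2n+2}\at^+_F\subseteq\pi^{n+2}\at^+_F$ because $n\geq0$, and using $[\chi_\pi(\gamma)]'(u_n)\equiv\theta_\gamma\pmod{\pi}$, one gets
\[
\pi^{-n-1}E_{n+1}(\gamma)\equiv\pi^{-n-1}E_n(\gamma)+\gamma(v)-\theta_\gamma\,v\pmod{\pi}.
\]
Here $\theta_\gamma\in(\et^+_F)^\times$ since $[a]'(0)=a$ is a unit and $u\in\MM_{\et^+_F}$; moreover the chain rule applied to $[\chi_\pi(\gamma\gamma')]=[\chi_\pi(\gamma)]\circ[\chi_\pi(\gamma')]$, together with $[\chi_\pi(\gamma')](u)=\gamma'(u)$, gives $\theta_{\gamma\gamma'}=\gamma'(\theta_\gamma)\,\theta_{\gamma'}$, so that, $\Gamma_F$ being abelian, $\gamma\bullet x:=\theta_\gamma^{-1}\gamma(x)$ defines an action of $\Gamma_F$ on $\et^+_F$. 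A direct computation then shows that $\gamma\mapsto-\theta_\gamma^{-1}\,\overline{\pi^{-n-1}E_n(\gamma)}$ is a continuous $1$-cocycle for this twisted action, and the obstruction to solving for $v$ is precisely its class in $H^1(\Gamma_F,\et^+_F)$. The vanishing of this class is what I would deduce from Lemma~9.3 of \cite{C02}; it provides $v$, hence $u_{n+1}$, again topologically nilpotent and reducing to $u$.

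Since $u_{n+1}\equiv u_n\pmod{\pi^{n+1}}$ and $\at^+_F$ is $\pi$-adically complete and separated, the sequence $(u_n)$ converges to a topologically nilpotent $\hat u\in\at^+_F$ lifting $u$. For each fixed $\gamma$, the $\pi$-adic continuity of $\gamma$ and of $x\mapsto[\chi_\pi(\gamma)](x)$ on topologically nilpotent elements gives
\[
\gamma(\hat u)-[\chi_\pi(\gamma)](\hat u)=\lim_n E_n(\gamma)\in\bigcap_n\pi^{n+1}\at^+_F=0,
\]
which is the asserted functional equation $\gamma(\hat u)=[\chi_\pi(\gamma)]\circ\hat u$. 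The step I expect to be the main obstacle is the inductive one: it is the twist by the cocycle $\gamma\mapsto[\chi_\pi(\gamma)]'(u)$ that prevents the correction $v$ from being produced by a naive averaging over $\Gamma_F$, and this is exactly why one must appeal to Colmez's lemma, and hence why $[\pi](T)$ is normalized to be a polynomial; the choice of first approximation, the bookkeeping with powers of $\pi$, and the passage to the limit are all routine by comparison.
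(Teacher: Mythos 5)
Your approach diverges completely from the paper's, and it has a genuine gap at the crucial step.

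The paper's proof is short and exploits \emph{uniqueness} rather than cohomology: Lemma~9.3 of \cite{C02} produces, for each topologically nilpotent $v \in \et^+_F$, a \emph{unique} lift $\hat v \in \at^+_F$ satisfying $\phi_q(\hat v)=[\pi](\hat v)$ (the element Colmez calls $\{v\}$). Applying this to $u$ gives $\hat u$. Now $\gamma(\hat u)$ and $[\chi_\pi(\gamma)](\hat u)$ both lift $\gamma(u)=[\chi_\pi(\gamma)](u)$, and both satisfy the Frobenius compatibility (using that $\phi_q$ commutes with $\gamma$, and that $[\pi]$ and $[\chi_\pi(\gamma)]$ commute under $\circ$). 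By uniqueness, they coincide. The Galois equivariance thus comes for free; no approximation and no cohomology are needed. Note that the normalization of $[\pi](T)$ as a monic polynomial of degree $q$ is exactly the hypothesis Colmez imposes in his lemma, and the Frobenius relation $\phi_q(\hat u)=[\pi](\hat u)$ — which you never invoke — is the entire content of that lemma.

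Your proof sets up a Teichm\"uller lift and a $\pi$-adic successive approximation, correctly linearizes the obstruction at each step, and reduces matters to the vanishing of a class in a twisted $H^1(\Gamma_F, \et^+_F)$. But you then write that ``the vanishing of this class is what I would deduce from Lemma~9.3 of \cite{C02}.'' This is the gap: Colmez's Lemma~9.3 is a statement about $\phi_q$-compatible lifts, not about $\Gamma_F$-cohomology, and there is no evident route from one to the other. As written, you have reduced the proposition to an unproved cohomological vanishing, and the citation you offer does not establish it. You would either need to prove the twisted $H^1$ vanishing directly (which is a nontrivial statement about the continuous cohomology of $\Gamma_F$ acting on the valuation ring of a perfectoid field in characteristic $p$), or — far more economically — use Colmez's lemma the way the paper does, by characterizing $\hat u$ through the Frobenius equation and letting uniqueness do the work.
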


\begin{proof}
By lemma 9.3 of \cite{C02}, there is a unique lift $\hat{u} \in \at^+_F$ of $u$ such that $\phi_q(\hat{u}) = [\pi](\hat{u})$ (in ibid., this element is denoted by $\{u\}$). If $\gamma \in \Gamma_F$, then both $\gamma(\hat{u})$ and $[\chi_\pi(\gamma)](\hat{u})$ are lifts of $u$ that are compatible with Frobenius as above. By unicity, they are equal.
\end{proof}

Let $\log_{\LT}(T)$ and $\exp_{\LT}(T)$ be the logarithm and exponential series for $\LT$. Write $\exp_{\LT}(T) = \sum_{n \geq 1} e_n T^n$ and $\exp_{\LT}(T)^j = \sum_{n \geq j} e_{j,n} T^n$ for $j \geq 1$.

\begin{lemm}
\label{valexp}
We have $\val_\pi(e_{j,n}) \geq -n/(q-1)$ for all $j,n \geq 1$.
\end{lemm}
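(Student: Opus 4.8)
The plan is to reduce everything to the functional equation $\exp_{\LT}(\pi T) = [\pi]\circ\exp_{\LT}(T)$, which follows from the identity $\log_{\LT}([\pi](T)) = \pi\log_{\LT}(T)$ and the fact that $\exp_{\LT}$ is the compositional inverse of $\log_{\LT}$. Write $[\pi](X) = \sum_{i=1}^{q} a_i X^i$, with the normalization fixed in the excerpt; then $a_1 = \pi$, $a_q = 1$, and $\val_\pi(a_i)\geq 1$ for $1\leq i\leq q-1$, since $[\pi](X)\equiv X^q \bmod \pi$. Comparing coefficients of $T^n$ in $\exp_{\LT}(\pi T) = \sum_{n\geq 1} e_n\pi^n T^n$ and in $[\pi]\circ\exp_{\LT}(T) = \sum_{i=1}^q a_i\sum_{m\geq i} e_{i,m}T^m$, and moving the term $i=1$ (which equals $\pi e_n$, as $e_{1,n} = e_n$) to the left, gives for $n\geq 2$ the recursion
\[ (\pi^n - \pi)\, e_n \;=\; \sum_{i=2}^{\min(q,n)} a_i\, e_{i,n}, \]
in which $\pi^n-\pi$ has $\pi$-adic valuation exactly $1$.

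I would then prove, by strong induction on $n$, the sharper estimate $\val_\pi(e_{j,n})\geq -(n-j)/(q-1)$ for all $j,n\geq 1$; this implies the lemma, since $-(n-j)/(q-1)\geq -n/(q-1)$. Fix $n$ and assume the estimate for all smaller indices. If $j>n$ then $e_{j,n}=0$; if $j=n$ then $e_{n,n}=e_1^n=1$ and the bound holds with equality (this also covers $n=1$). For $2\leq j<n$, the identity $\exp_{\LT}(T)^j=\exp_{\LT}(T)\cdot\exp_{\LT}(T)^{j-1}$ gives $e_{j,n}=\sum_{a+b=n,\,a\geq 1} e_a\, e_{j-1,b}$ with $a,b<n$, and the induction hypothesis bounds the two factors by $-(a-1)/(q-1)$ and $-(b-j+1)/(q-1)$, whose sum is $-(n-j)/(q-1)$. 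This settles all $j\geq 2$ at level $n$, and in particular makes available $\val_\pi(e_{i,n})\geq -(n-i)/(q-1)$ for $2\leq i\leq q$. Finally, for $j=1$ (so $n\geq 2$) I read $\val_\pi(e_n)$ off the recursion: for $2\leq i\leq q-1$ one has $\val_\pi(a_ie_{i,n})\geq 1-(n-i)/(q-1)$ since $\val_\pi(a_i)\geq 1$, and for $i=q$ one has $\val_\pi(a_qe_{q,n})\geq -(n-q)/(q-1)$ since $a_q$ is a unit; subtracting the valuation $1$ of $\pi^n-\pi$, one checks that both $-(n-i)/(q-1)$ (for $i\geq 1$) and $-(n-q)/(q-1)-1$ are $\geq -(n-1)/(q-1)$, so $\val_\pi(e_n)\geq -(n-1)/(q-1)$.

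The one genuinely tight step --- which I expect to be the main obstacle --- is the top term $i=q$ of the recursion: because $a_q$ is a unit, it brings no extra power of $\pi$, and the inequality $-(n-q)/(q-1)-1\geq -(n-1)/(q-1)$ needed to close the induction holds as an equality. This is precisely why one cannot carry the weaker bound $-n/(q-1)$ through the induction (it would fail by $1$ at $i=q$), but must propagate the sharp bound $-(n-1)/(q-1)$. Everything else is routine: the valuations of the coefficients $a_i$ are immediate from the Lubin-Tate axioms, and the multinomial identities are formal. As a sanity check, in the cyclotomic case $F=\Qp$, $\pi=p$, $q=p$ one has $\exp_{\LT}(T)=e^T-1$, hence $e_n=1/n!$ and $\val_p(e_n)=-(n-s_p(n))/(p-1)$, where $s_p(n)$ is the sum of the base-$p$ digits of $n$; this is $\geq -(n-1)/(p-1)$, with equality exactly when $n$ is a power of $p$.
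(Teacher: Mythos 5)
Your proof is correct, but it takes a genuinely different route from the paper's. The paper's argument is analytic: it observes that $[\pi]$ increases $\val_\pi$ by at least $1$ once $\val_\pi(z)\geq 1/(q-1)$, so that the limit formula $\log_{\LT}(T)=\lim_n [\pi^n](T)/\pi^n$ shows that $\varpi^{-1}\log_{\LT}(\varpi T)\in T+T^2\OO_K\dcroc{T}$ for $\varpi$ with $\val_\pi(\varpi)=1/(q-1)$; taking the compositional inverse gives the same integrality for $\varpi^{-1}\exp_{\LT}(\varpi T)$, and the bound for all $j$ then falls out by raising to the $j$th power. Your argument is instead purely algebraic/combinatorial: you extract a coefficient recursion from the functional equation $\exp_{\LT}(\pi T)=[\pi]\circ\exp_{\LT}(T)$ and close a strong induction on $n$, first settling $j\geq 2$ via $\exp_{\LT}^j=\exp_{\LT}\cdot\exp_{\LT}^{j-1}$ and then feeding that back into the recursion for $j=1$. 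Both proofs in fact yield the sharper estimate $\val_\pi(e_{j,n})\geq -(n-j)/(q-1)$ (the paper's too, implicitly, since $e_{j,n}\varpi^{n-j}\in\OO_K$), and you correctly identify that this sharper form is what makes the induction close. The paper's route is shorter and more conceptual once one accepts the maximum-modulus step; yours is more elementary, self-contained, and makes the role of the top coefficient $a_q=1$ of $[\pi]$ transparent as the tight term. One small remark: you should also note the subcase $n<q$ of the $j=1$ step, where the sum stops at $i=n$ and the unit coefficient $a_q$ never appears, so the bound is strict rather than tight — but the inequality you need is of course still satisfied.
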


\begin{proof}
Fix $\varpi \in \Qpbar$ such that $\val_\pi(\varpi) = 1/(q-1)$ and let $K=F(\varpi)$. Recall that $\log_{\LT}(T) = \lim_{n \to  +\infty} [\pi^n](T)/\pi^n$. If $z \in \Cp$ and $\val_\pi(z) \geq 1/(q-1)$, then $\val_\pi([\pi](z)) \geq \val_\pi(z)+1$. 
This implies that $1/\varpi \cdot \log_{\LT}(\varpi T) \in T + T^2 \OO_K \dcroc{T}$. Its composition inverse $1/\varpi \cdot \exp_{\LT}(\varpi T)$ therefore also belongs to $T + T^2 \OO_K \dcroc{T}$. This implies the claim for $j=1$. The claim for $j \geq 1$ follows easily.
\end{proof}

We use a number of rings of $p$-adic periods in the Lubin-Tate setting, whose construction and properties were recalled in \S 3 of \cite{B16}. Proposition \ref{colift} gives us an element $\hat{Y} \in \at_F^+$ (denoted by $u$ in ibid.). Let $\bt_F^+=\at_F^+[1/\pi]$. Given an interval $I = [r;s] \subset [0;+\infty[$, a valuation $V(\cdot,I)$ on $\bt^+_F[1/\hat{Y}]$ is constructed in ibid., as well as various completions of that ring. We use $\bt_F^I$, the completion of $\bt^+_F[1/\hat{Y}]$ for $V(\cdot,I)$ and $\btrig{,r}{,F} = \varprojlim_{s \geq r} \bt_F^{[r;s]}$. Inside $\btrig{,r}{,F}$, there is the ring $\bnrig{,r}{,F}$ of power series $f(\hat{Y})$ with coefficients in $F$, where $f(T)$ converges on a certain annulus depending on $r$.

\begin{lemm}
\label{atpbrdag}
If $s \geq 0$, then $\bnrig{,s}{,F} \cap \at^+_F = \afont^+_F$.
\end{lemm}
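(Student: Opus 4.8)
The inclusion $\afont^+_F \subseteq \bnrig{,s}{,F} \cap \at^+_F$ is immediate — an element of $\afont^+_F = \OO_F \dcroc{\hat{Y}}$ is a power series in $\hat{Y}$ with coefficients in $\OO_F \subseteq F$, so it lies both in $\bnrig{,s}{,F}$ and in $\at^+_F$ — so the plan is to prove the reverse inclusion, using the valuations $V(\cdot,I)$ recalled in \S 3 of \cite{B16}.

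Let $x \in \bnrig{,s}{,F} \cap \at^+_F$ and write $x = f(\hat{Y})$ with $f(T) = \sum_k a_k T^k$, $a_k \in F$ (the $a_k$ being uniquely determined by $x$); the goal is to show $f(T) \in \OO_F\dcroc{T}$. Since $\at^+_F$ is the ring of integral elements one has $V(x,I) \geq 0$ for every admissible interval $I = [r;t]$, and for $[r;t]$ inside the region of convergence of $f$ one has $x = \sum_k a_k \hat{Y}^k$ in $\bt_F^{[r;t]}$ with, after normalising $\val(\hat{Y})=1$,
\[
0 \leq V(x,[r;t]) = \inf_k\bigl(\val_\pi(a_k) + \min(kr,\,kt)\bigr),
\]
so that $\val_\pi(a_k) \geq -kr$ for $k \geq 0$ and $\val_\pi(a_k) \geq -kt$ for $k < 0$. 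I would first let the left endpoint $r$ tend to $0$: this forces $\val_\pi(a_k) \geq 0$ for all $k \geq 0$, so $f_+(T) := \sum_{k \geq 0} a_k T^k$ lies in $\OO_F\dcroc{T}$ and $f_+(\hat{Y}) \in \afont^+_F \subseteq \at^+_F$, and after subtracting it I may assume $f$ is supported in negative degrees. Then for $k<0$ the estimate $\val_\pi(a_k) \geq -kt = |k|\,t > 0$ holds for every admissible $t$, so the negative part of $f$ is highly $\pi$-divisible, and one peels it off one power of $\pi$ at a time — using that $\at^+_F$ is $\pi$-torsion free and $\pi$-adically separated — until nothing is left; thus $f(T) = f_+(T) \in \OO_F\dcroc{T}$ and $x \in \afont^+_F$.

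The step I expect to be delicate — the main obstacle — is making the limit ``$r \to 0$'' legitimate for a general parameter $s$: a priori $f$ converges only on the annulus attached to $s$, so one must first manoeuvre into the situation where $f$ converges up to the boundary of the open unit disc. I would do this by applying a power of $\phi_q$, which is bijective on $\at^+_F$ (it is bijective modulo $\pi$, being the $q$-power map on the perfect ring $\et^+_F$) and whose effect $\phi_q(\hat{Y}) = [\pi](\hat{Y})$ contracts the disc of convergence toward its boundary; then $\phi_q^n(x)$ satisfies the boundary hypotheses, and one transports the conclusion back to $x$ from the observation that $\phi_q^n\bigl(f(\hat{Y})\bigr) \in \OO_F\dcroc{\hat{Y}}$, combined with $\phi_q^n(\hat{Y}) \equiv \hat{Y}^{q^n} \bmod \pi$ and a bound on the denominators of $f$, already forces $f(T) \in \OO_F\dcroc{T}$. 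The remaining ingredients are all standard properties of $\at^+_F$ and of the $V(\cdot,I)$ from \S 3 of \cite{B16}: non-negativity of $V(\cdot,I)$ on $\at^+_F$, the value of $V(\hat{Y},\cdot)$, the $\pi$-adic separatedness of $\at^+_F$, and the compatibility and uniqueness of the $\hat{Y}$-expansions in the completions $\bt_F^I$.
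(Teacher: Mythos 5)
The high-level idea — exploit $V(x,I)\geq 0$, which holds because $x\in\at_F^+$ — is the same as the paper's, but the paper's proof is considerably shorter and avoids the obstacle you run into, which is a real gap in your version. The paper simply observes that $V(f,[s;t])\geq 0$ for \emph{all} $t\geq s$ already says that $f$ is bounded by $1$ on the whole annulus attached to $s$, hence $f\in\bfont^{\dagger,s}_F$; one then finishes with the standard identification $\bfont^{\dagger,s}_F\cap\at_F^+=\afont_F^+$. No coefficientwise limit ``$r\to 0$'' is ever needed, because the only limit one ever takes is $t\to\infty$, which is always available.

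Your proposal, by contrast, depends on sending the left endpoint $r$ of the interval to $0$, but for a general $s>0$ this limit is not admissible: the element $x$ only lies in $\bt_F^{[r;t]}$ for $r\geq s$, and its $\hat Y$-expansion need not converge, nor need the displayed formula for $V(x,[r;t])$ hold, below $r=s$. You correctly flag this as the delicate point, but the repair you propose does not work. Applying $\phi_q$ does not enlarge the annulus of convergence toward $0$; it does the opposite. Since $\phi_q(\hat Y)=[\pi](\hat Y)$, one has $\phi_q\colon\bnrig{,s}{,F}\to\bnrig{,qs}{,F}$, i.e.\ $\phi_q^n(x)$ lives on a \emph{thinner} annulus pushed toward the boundary $|\hat Y|\to 1$, which moves you further from, not closer to, the situation $s=0$. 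Going the other way, $\phi_q^{-1}$ is indeed an automorphism of $\at_F^+$, but it does not preserve $\bnrig{,s}{,F}$ at all: $\phi_q^{-1}(\hat Y)$ is not an $F$-power series in $\hat Y$, so $\phi_q^{-1}(x)$ leaves the ring in which your coefficient estimates make sense. Finally, even granting $\phi_q^n(x)\in\afont_F^+$, the implication $x\in\afont_F^+$ does not follow, since $\phi_q^{-n}(\afont_F^+)$ is strictly larger than $\afont_F^+$; you would need a separate argument there too. The cleanest fix is to abandon the coefficientwise extraction and argue via boundedness as the paper does: once you know $f$ is bounded by $1$, you are in the bounded Robba ring, and the containment $\bfont^{\dagger,s}_F\cap\at_F^+=\afont_F^+$ (which is where, if anywhere, a peeling-off-powers-of-$\pi$ argument belongs) finishes the proof.
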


\begin{proof}
Take $f(\hat{Y}) \in \bnrig{,s}{,F}$, $t \geq s$ and let $I=[s;t]$. We have $V(f,I)\geq 0$, so that $f$ is bounded by $1$ on the corresponding annulus. This is true for all $t$, so that $f \in \bfont^{\dagger,s}_F$. We now have $f \in \bfont^{\dagger,s}_F \cap \at^+_F = \afont_F^+$.
\end{proof}

Let $W$ be a Banach space with a continuous action of $\Gamma_F$. The notion of locally analytic vector was introduced in \cite{ST03}. Recall (see for instance \S 2 of \cite{B16}; the definition given there is easily seen to be equivalent to the following one) that an element $w \in W$ is locally $F$-analytic if there exists a sequence $\{w_k\}_{k \geq 0}$ of $W$ such that $w_k \to 0$, and an integer $n \geq 1$ such that for all $\gamma \in \Gamma_F$ such that $\chi_\pi(\gamma)=1+p^n c(\gamma)$ with $c(\gamma) \in \OO_F$, we have $\gamma(w) = \sum_{k \geq 0} c(\gamma)^k w_k$. If $W = \varprojlim_ i W_i$ is a Fr\'echet representation of $\Gamma_F$, we say that $w \in W$ is pro-$F$-analytic if its image in $W_i$ is locally $F$-analytic for all $i$.

\begin{prop}
\label{islocan}
If $r \geq 0$ and $x \in \at^+_F$ is such that $\val_Y(\overline{x})>0$ and $\gamma(x) = [\chi_\pi(\gamma)] (x)$ for all $\gamma \in \Gamma_F$, then $x$ is a pro-$F$-analytic element of $\btrig{,r}{,F}$.
\end{prop}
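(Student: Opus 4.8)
The plan is to check directly that $x$ satisfies the definition of a pro-$F$-analytic vector. Since $\btrig{,r}{,F}=\varprojlim_{s\ge r}\bt_F^{[r;s]}$, it suffices to prove that for each $s\ge r$ the image of $x$ in the Banach space $\bt_F^{[r;s]}$ is locally $F$-analytic, i.e. to produce an integer $n\ge 1$ and a sequence $w_k\to 0$ in $\bt_F^{[r;s]}$ with $\gamma(x)=\sum_{k\ge 0}c(\gamma)^k w_k$ for every $\gamma\in\Gamma_F$ such that $\chi_\pi(\gamma)=1+p^nc(\gamma)$ with $c(\gamma)\in\OO_F$. The mechanism is to linearise the functional equation $\gamma(x)=[\chi_\pi(\gamma)](x)$ using the Lubin-Tate logarithm: since $[1+p^nc]=[1]+_{\LT}[p^nc]$ and $[a](T)=\exp_{\LT}(a\log_{\LT}(T))$ as power series, one has, for any $n\ge 1$ and at least formally,
\[
\gamma(x)=x +_{\LT} [p^nc(\gamma)](x)=x +_{\LT} \exp_{\LT}\!\bigl(p^nc(\gamma)\cdot z\bigr),\qquad z:=\log_{\LT}(x).
\]

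The point where the hypothesis $\val_Y(\overline x)>0$ is used, and the main obstacle, is to make sense of this inside $\bt_F^{[r;s]}$. By the description of the valuations $V(\cdot,I)$ recalled in \S 3 of \cite{B16}, together with the fact that $\hat Y$ lifts $Y$, the condition $\val_Y(\overline x)>0$ implies that the image of $x$ in $\bt_F^{[r;s]}$ has strictly positive valuation; since the coefficients of $\log_{\LT}$ have at most logarithmic denominators, the series $z=\log_{\LT}(x)=\sum_{m\ge 1}\ell_m x^m$ then converges in $\bt_F^{[r;s]}$. Next, $\exp_{\LT}(T)=\sum_{k\ge 1}e_kT^k$ with $\val_\pi(e_k)\ge -k/(q-1)$ by Lemma \ref{valexp}, so $\exp_{\LT}(p^nc\cdot z)=\sum_{k\ge 1}\beta_k c^k$ with $\beta_k:=e_kp^{nk}z^k$, and choosing $n$ large enough (depending on $s$) one gets $V(\beta_k,[r;s])\ge k\delta$ for some $\delta>0$; hence this series converges for every $c\in\OO_F$ and $\beta_k\to 0$. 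Thus the displayed identity is a genuine identity in $\bt_F^{[r;s]}$.

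It remains to reorganise the right-hand side as a power series in $c(\gamma)$. Writing the formal group law as $F(X,W)=X+W+\sum_{i,j\ge 1}a_{ij}X^iW^j\in\OO_F\dcroc{X,W}$, the identity reads $\gamma(x)=F\!\bigl(x,\sum_{k\ge 1}\beta_kc(\gamma)^k\bigr)$; because $V(x,[r;s])>0$ and $V(\beta_k,[r;s])\ge k\delta$, this double series can be rearranged as $\gamma(x)=\sum_{k\ge 0}c(\gamma)^k w_k$ with $w_0=F(x,0)=x$ and $V(w_k,[r;s])\ge k\delta$. In particular $w_k\to 0$ in $\bt_F^{[r;s]}$, so $x$ is locally $F$-analytic there; letting $s$ vary gives that $x$ is pro-$F$-analytic in $\btrig{,r}{,F}$. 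To summarise, the only delicate ingredient is the positivity of $V(x,[r;s])$, which is exactly what the hypothesis $\val_Y(\overline x)>0$ provides and which makes $\log_{\LT}(x)$ converge; everything after that is a formal computation with $\exp_{\LT}$ and $\log_{\LT}$, controlled by Lemma \ref{valexp}.
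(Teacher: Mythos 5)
Your proof is correct and follows essentially the same route as the paper's: both reduce to local analyticity in each Banach quotient $\bt_F^{[r;s]}$, linearise the functional equation via the identity $[1+p^nc](T)=S\bigl(T,\exp_{\LT}(p^nc\log_{\LT}(T))\bigr)$, and control convergence by combining the bound $\val_\pi(e_{j,k})\geq -k/(q-1)$ of Lemma~\ref{valexp} with a choice of $n$ large enough that the relevant valuations are uniformly positive (your ``$\geq k\delta$'' is the same device as the paper's factorisation $p^{nk}e_{j,k}=p^k e_{j,k}\cdot(p^{n-1})^k$). The only cosmetic difference is that you expand $\exp_{\LT}(p^nc\,z)$ in $c$ first and then rearrange the composite with the group law, whereas the paper expands the composite directly using the coefficients $e_{j,k}$ of $\exp_{\LT}^j$; both are routine and yield the same sequence $w_k\to 0$.
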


\begin{proof}
We prove that for all $s \geq r$, $x$ is a locally $F$-analytic vector of $\bt_F^{[r;s]}$. The proposition then follows, since $\btrig{,r}{,F} = \varprojlim_{s \geq r} \bt_F^{[r;s]}$ as Fr\'echet spaces. 

Let $S(X,Y) = \sum_{i,j} s_{i,j} X^i Y^j \in \OO_F \dcroc{X,Y}$ be the power series that gives the addition in $\LT$. We have $\log_{\LT}(x) \in \bt_F^{[r;s]}$. Take $n \geq 1$ such that $V(p^{n-1} \log_{\LT}(x),[r;s]) > 0$. We have $[a](T) = \exp_{\LT}(a \log_{\LT}(T))$, so that $[1+p^n c](T) = S(T,\exp_{\LT}(p^n c \log_{\LT}(T)))$. If $\chi_\pi(\gamma) = 1+p^n c(\gamma)$, then 
\begin{align*} 
\gamma(x) & = \sum_{k \geq 0} c(\gamma)^k \sum_{j \leq k} p^{nk} e_{j,k} \log_{\LT}(x)^k \sum_{i \geq 0} s_{i,j} x^i  \\ & = \sum_{k \geq 0} c(\gamma)^k \sum_{j \leq k} p^k e_{j,k} \cdot (p^{n-1} \log_{\LT}(x))^k \cdot \sum_{i \geq 0} s_{i,j} x^i . 
\end{align*}
We have $p^k e_{j,k} \in \OO_F$ by lemma \ref{valexp}, $V(p^{n-1} \log_{\LT}(x),[r;s]) > 0$ by hypothesis, $s_{i,j} \in \OO_F$ and $V(x,[r;s])>0$. This implies the claim.
\end{proof}

\begin{prop}
\label{locanpr}
If $r > 0$ and $x \in \at^+_F$ is a pro-$F$-analytic element of $\btrig{,r}{,F}$, then there exists $n \geq 0$ such that $x \in \phi_q^{-n}(\afont^+_F)$.
\end{prop}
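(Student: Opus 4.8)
The plan is to exploit the structure theory of locally/pro-analytic vectors in the Lubin-Tate period rings, which is the subject of \S 4--5 of \cite{B16}. Recall that the operator $\nabla = \log_{\LT}(\hat{Y}) \cdot d/d\hat{Y}$ (or more precisely the derivative of the $\Gamma_F$-action, suitably normalized) acts on pro-$F$-analytic vectors of $\btrig{,r}{,F}$. The ring $\bnrig{,r}{,F}$ is stable under $\nabla$, and in fact one knows (again from \cite{B16}) that the pro-$F$-analytic vectors of $\btrig{,r}{,F}$ are precisely $\cup_{n \geq 0} \phi_q^{-n}(\bnrig{,r_n}{,F})$ for appropriate $r_n$; the point is that the analyticity forces $x$ to lie in such a union of twisted power-series rings, rather than in the much larger completed ring. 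So the first step is to invoke that description: since $x$ is pro-$F$-analytic in $\btrig{,r}{,F}$ with $r > 0$, there exists $m \geq 0$ with $\phi_q^m(x) \in \bnrig{,s}{,F}$ for some $s \geq r$; equivalently $x \in \phi_q^{-m}(\bnrig{,s}{,F})$.

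The second step is to combine this membership with the hypothesis $x \in \at^+_F$ in order to upgrade it to integrality, using Lemma~\ref{atpbrdag}. One has $\phi_q^m(x) \in \at^+_F$ as well (since $\phi_q$ preserves $\at^+_F$), and by Lemma~\ref{atpbrdag}, $\bnrig{,s}{,F} \cap \at^+_F = \afont^+_F$. Hence $\phi_q^m(x) \in \afont^+_F$, i.e.\ $x \in \phi_q^{-m}(\afont^+_F)$, which is exactly the desired conclusion with $n = m$. The role of the hypotheses $r > 0$ (so that the annulus of convergence is a genuine punctured disc, making the power-series description available) and $\val_Y(\overline{x}) > 0$ (inherited from Proposition~\ref{islocan}, ensuring $x$ really lands in the plus-part rings and that $\log_{\LT}(x)$ makes sense) is to guarantee these two inputs are applicable.

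The main obstacle is the first step: extracting from abstract pro-$F$-analyticity the concrete statement that $x$ lies, after applying a power of $\phi_q$, in a ring of convergent power series $\bnrig{,s}{,F}$ with coefficients in $F$. This is where the real content of \cite{B16} is used — one needs that the locally analytic vectors of the $\bt_F^{[r;s]}$, or of their Fr\'echet limit, are controlled by the $\bnrig{}{}$ filtration, which in turn rests on computing the $\Gamma_F$-cohomology (or the action of $\nabla$) on these rings and checking that no "extra" analytic vectors appear beyond the expected power series. I would cite the relevant proposition of \cite{B16} (the analogue for the Lubin-Tate tower of the description of pro-analytic vectors in $\btrig{}{}$) rather than reprove it. Once that structural input is in hand, the passage to integrality via Lemma~\ref{atpbrdag} is formal.
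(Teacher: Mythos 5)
Your proposal follows essentially the same route as the paper: invoke the structural description of pro-$F$-analytic vectors from \cite{B16} (the paper cites item (3) of Theorem 4.4 there) to get $x \in \phi_q^{-n}(\bnrig{,s}{,F})$, then apply Lemma~\ref{atpbrdag} to $\phi_q^n(x)$ to upgrade to $\afont^+_F$. The only minor slip is that $\val_Y(\overline{x})>0$ is not a hypothesis of this proposition (it appears in Proposition~\ref{islocan}), but that does not affect your argument.
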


\begin{proof}
By item (3) of theorem 4.4 of \cite{B16} (applied with $K=F$), there exists $n \geq 0$ and $s>0$ such that $x \in \phi_q^{-n} (\bnrig{,s}{,F})$. The proposition now follows from lemma \ref{atpbrdag} applied to $\phi_q^n(x)$.
\end{proof}

\section{Composition of power series}

Recall that a power series $f(Y) \in k\dcroc{Y}$ is separable if $f'(Y) \neq 0$. If $f(Y) \in Y \cdot k \dcroc{Y}$, we say that $f$ is invertible if $f'(0) \in k^\times$, which is equivalent to $f$ being invertible for composition (denoted by $\circ$). We say that $w(Y) \in Y \cdot k \dcroc{Y}$ is nontorsion if $w^{\circ n}(Y) \neq Y$ for all $n \geq 1$. If $w(Y) = \sum_{i \geq 0} w_i Y^i \in k \dcroc{Y}$ and $m \in \ZZ$, let $w^{(m)}(Y) = \sum_{i \geq 0} w_i^{p^m} Y^i$. Note that $(w \circ v)^{(m)} = w^{(m)} \circ v^{(m)}$.

\begin{prop}
\label{lubnarch}
Let $w(Y) \in Y + Y^2\cdot k \dcroc{Y}$ be an invertible nontorsion series, and let $f(Y) \in Y \cdot k \dcroc{Y}$ be a separable power series. If $w^{(m)} \circ f = f \circ w$, then $f$ is invertible.
\end{prop}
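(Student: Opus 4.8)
The plan is to argue by contradiction: suppose $f$ is not invertible, so $f'(0) = 0$ and $f$ has a zero of order $d \geq 2$ at the origin, say $f(Y) \in Y^d \cdot k\dcroc{Y}^\times$ (here I use that $f$ is separable, so $f \not\equiv 0$ and in fact $f$ is not a $p$-th power times a unit; but more carefully, since $f$ is separable it is not of the form $g(Y^p)$, so its lowest-degree term has degree prime to... no — rather, separability guarantees $f'(Y) \neq 0$, which I will use to control iterates). First I would compare the behavior of both sides of $w^{(m)} \circ f = f \circ w$ under iteration. Since $w \in Y + Y^2 k\dcroc{Y}$ is tangent to the identity, its iterates $w^{\circ n}$ are all again in $Y + Y^2 k\dcroc{Y}$, and because $w$ is nontorsion, the classical theory (Sen's theorem / the theory of the iterative logarithm in characteristic $p$) tells us that the "lower ramification-type" invariant of $w$ — the integer $i(w) \geq 2$ defined by $w(Y) \equiv Y \bmod Y^{i(w)}$, $w(Y) \not\equiv Y \bmod Y^{i(w)+1}$ — satisfies $i(w) \equiv 1 \bmod p$, and $i(w^{\circ p}) > i(w)$. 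The key point is that $w^{(m)}$ has the \emph{same} invariant: $i(w^{(m)}) = i(w)$, since raising coefficients to the $p^m$-th power does not change which coefficients vanish.

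The heart of the argument is a valuation/order count. Write $i = i(w)$ and suppose $f(Y) = c Y^d(1 + \cdots)$ with $c \neq 0$ and $d \geq 2$. Then on one hand $f \circ w (Y) = c\, w(Y)^d (1+\cdots)$; since $w(Y) = Y(1 + b Y^{i-1} + \cdots)$ with $b \neq 0$, we get $f\circ w(Y) - f(Y) = c Y^d\bigl( (1 + bY^{i-1}+\cdots)^d - 1 \bigr)(1+\cdots) = d b c\, Y^{d + i - 1}(1+\cdots)$, using that $p \nmid d$ — which is where separability of $f$ enters: if $p \mid d$ we would instead pass to the largest power of $p$ dividing all exponents, but $f$ separable means $f \notin k\dcroc{Y^p}$, so we may reduce to the case $p \nmid d$ after factoring. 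On the other hand $w^{(m)}\circ f(Y) - f(Y) = \beta \cdot f(Y)^{i}\cdot(\text{unit}) + \cdots$ in leading order, where $\beta = b^{p^m} \neq 0$, so its order is $di$ up to lower-order corrections — precisely, $w^{(m)}(Z) - Z$ has order exactly $i$ in $Z$, so substituting $Z = f(Y)$ of order $d$ gives order exactly $di$. Equating the two computed orders forces $d + i - 1 = di$, i.e. $(d-1)(i-1) = 0$, contradicting $d \geq 2$ and $i \geq 2$.

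The step I expect to be the main obstacle is making the order computation on the left-hand side genuinely rigorous when $p \mid d$ or when lower-order terms of $w$ and $f$ conspire: one must be careful that the leading terms I isolated do not cancel against contributions I have suppressed in the "$\cdots$". The clean way around this is to work systematically with the substitution map and the invariant $i(\cdot)$: for any $g \in Y + Y^2 k\dcroc{Y}$ one has $\val_Y(g(Y) - Y) = i(g)$, and for $f \in Y^d k\dcroc{Y}^\times$ one checks $\val_Y\bigl(g(f(Y)) - f(Y)\bigr) = d + i(g) - 1$ when $g$ is tangent to the identity \emph{and $p \nmid d$}, whereas $\val_Y\bigl(f(g(Y)) - f(Y)\bigr) \geq di(g) - (d-1)$ in general with equality generically. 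To handle $p \mid d$, I would first replace $f$ by the unique $f_0$ with $f = f_0^{(-a)} \circ (Y^{p^a})$... more cleanly: since $w^{(m)} \circ f = f \circ w$ and both sides are separable exactly when $f$ is, and since the Lubin--Sarkis-type rigidity only needs $p \nmid i(w)$, I would instead iterate: apply the functional equation $n$ times to get $w^{(mn)} \circ f = f \circ w^{\circ n}$, choose $n$ so that $i(w^{\circ n})$ is large and coprime to $d$ (possible since $i(w^{\circ n}) \to \infty$ and consecutive values cannot all share the prime factors of $d$), and run the order count with $w^{\circ n}$ in place of $w$; the contradiction $(d-1)(i(w^{\circ n})-1) = 0$ then follows cleanly, proving $d = 1$, i.e. $f$ is invertible.
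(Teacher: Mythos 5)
Your overall strategy — compare the $Y$-adic order of both sides of $w^{(m)} \circ f = f \circ w$ and derive a numerical contradiction if $\val_Y(f) = d \geq 2$ — is exactly the one the paper takes. However, the case $p \mid d$, which you yourself flag as ``the main obstacle,'' is not correctly resolved, and this case is the whole point: when $p \nmid d$ the conclusion is easy, and for a nonzero separable $f$ with $p \mid d = \val_Y(f)$ there is no reduction to $p \nmid d$. Separability only says $f \notin k\dcroc{Y^p}$, i.e.\ \emph{some} exponent of $f$ is prime to $p$; it says nothing about the lowest one. So the parenthetical ``we may reduce to the case $p \nmid d$ after factoring'' does not work, and the alternative fix — choosing $n$ so that $i(w^{\circ n})$ is coprime to $d$ — addresses a different (irrelevant) divisibility: the obstruction is $p \mid d$ killing the term $d f_d Y^{d-1}$ in $f'$, not any arithmetic relation between $i(w^{\circ n})$ and $d$. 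With $p \mid d$ the equation you would get is not $(d-1)(i-1)=0$; it is $d i = i + k$ with $k = \val_Y(f') \geq d$, which by itself is not a contradiction.

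The missing idea, which is what the paper's proof supplies, is to treat $k = \val_Y(f')$ as the relevant invariant in place of $d - 1$. One has $\val_Y(f\circ w - f) = i(w) + k$ (Taylor expansion; valid once $i(w) > k$) and $\val_Y(w^{(m)}\circ f - f) = d\cdot i(w)$, giving $(d-1)\, i(w) = k$. Now $k$ is \emph{fixed}, while $i(w^{\circ p^\ell}) \to \infty$ because $w$ is nontorsion; replacing $w$ by $w^{\circ p^\ell}$ (which preserves the functional equation since $(v \circ w)^{(m)} = v^{(m)} \circ w^{(m)}$) one may assume $i(w) > k$. Then $(d-1)\,i(w) = k < i(w)$ forces $d = 1$. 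Note also that the displayed formulas in your ``clean way around this'' paragraph are garbled: $\val_Y\bigl(g(f(Y)) - f(Y)\bigr) = d\cdot i(g)$, not $d + i(g) - 1$, and $\val_Y\bigl(f(g(Y)) - f(Y)\bigr) = i(g) + \val_Y(f')$ (once $i(g) > \val_Y(f')$), not $d\,i(g) - (d-1)$; you had them right in the preceding paragraph, so this is a transcription slip, but it shows the order count needs to be pinned down carefully rather than by cancellation of leading terms in products of units.
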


\begin{proof}
This is a slight generalization of lemma 6.2 of \cite{L94}. Write 
\begin{align*}
f(Y) & = f_n Y^n + \bigO(Y^{n+1}) \\ f'(Y)  &= g_k Y^k + \bigO(Y^{k+1}) \\ w(Y) & = Y + w_r Y^r + \bigO(Y^{r+1}),
\end{align*}
with $f_n,g_k,w_r \neq 0$. Since $w$ is nontorsion, we can replace $w$ by $w^{\circ p^\ell}$ for $\ell \gg 0$ and assume that $r \geq  k+1$. We have 
\begin{align*} w^{(m)} \circ f & = f(Y) + w_r^{(m)} f(Y)^r + \bigO(Y^{n(r+1)})  \\ & = f(Y) + w_r^{(m)} f_n^r Y^{nr} + \bigO(Y^{nr+1}). \end{align*}
If $k=0$, then $n=1$ and we are done, so assume that $k \geq 1$. We have
\begin{align*} f \circ w &= f(Y + w_r Y^r + \bigO(Y^{r+1})) \\ &= f(Y) + w_r Y^r f'(Y)  + \bigO(Y^{2r})  \\ &=  f(Y) + w_r g_k Y^{r+k} + \bigO(Y^{r+k+1}). \end{align*}
This implies that $nr=r+k$, hence $(n-1)r=k$, which is impossible if $r>k$ unless $n=1$. Hence $n=1$ and $f$ is invertible.
\end{proof}

We now prove theorem A. Take $u \in \et_F$ such that $\val_Y(u)=1$ and $u \circ [g] = [g] \circ u$ for all $g \in \OO_F^\times$. By proposition \ref{colift}, $u$ has a lift $\hat{u} \in \at^+_F$ such that $\gamma(\hat{u})   = [\chi_\pi(\gamma)] \circ \hat{u}$ for all $\gamma \in \Gamma_F$. By proposition \ref{islocan}, $\hat{u}$ is a pro-$F$-analytic element of $\btrig{,r}{,F}$. By proposition \ref{locanpr}, there exists $n \geq 0$ such that $\hat{u} \in \phi_q^{-n}(\afont^+_F)$. This implies that $u \in \phi_q^{-n}(\efont^+_F)$. Hence there is an $m \in \ZZ$ such that $f(Y) = u(Y)^{p^m}$ belongs to $Y \cdot k \dcroc{Y}$ and is separable. Note that $\val_Y(f) = p^m$. Take $g \in 1+\pi \OO_F$ such that $g$ is nontorsion, and let $w(Y) = [g](Y)$ so that $u \circ w = w \circ u$. We have $f \circ w = w^{(m)} \circ f$. By proposition \ref{lubnarch}, $f$ is invertible. This implies that $\val_Y(f)=1$, so that $m=0$ and $u$ itself is invertible. Since $u \circ [g] = [g] \circ u$ for all $g \in \OO_F^\times$, theorem 6 of \cite{LS07} implies that $u \in \Aut(\LT)$. Hence there exists $a \in \OO_F^\times$ such that $u(Y)=[a](Y)$.

\section{Tate traces in the Lubin-Tate setting}

If $F=\Qp$ and $\pi=p$ (namely in the cyclotomic situation) the fact that, in the proof of theorem A, there exists $n \geq 0$ such that $u \in \phi_q^{-n}(\efont^+_F)$ follows from the main result of \cite{BR22}. We now  explain why the methods of ibid don't extend to the Lubin-Tate case. More precisely, we prove that there is no $\Gamma_F$-equivariant $k$-linear projector $\et_F \to \efont_F$ if $F \neq \Qp$. Choose a coordinate $T$ on $\LT$ such that $\log_{\LT}(T) = \sum_{n \geq 0} T^{q^n}/\pi^n$, so that $\log'_{\LT}(T) \equiv 1 \bmod{\pi}$. Let $\partial = 1/\log'_{\LT}(T) \cdot d/dT$ be the invariant derivative on $\LT$.

\begin{lemm}
\label{ltder}
We have $d \gamma(Y)/dY \equiv \chi_\pi(\gamma)$ in $\efont_F$ for all $\gamma \in \Gamma_F$.
\end{lemm}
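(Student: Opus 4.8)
The plan is to combine the defining property $\log_{\LT}([a](T)) = a\log_{\LT}(T)$ of the Lubin-Tate logarithm with the normalization $\log'_{\LT}(T) \equiv 1 \bmod \pi$ afforded by the chosen coordinate.

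By definition of the $\Gamma_F$-action on $\efont_F = k\dpar{Y}$, we have $\gamma(Y) = [\chi_\pi(\gamma)](Y)$, the image in $k\dpar{Y}$ of the Lubin-Tate series $[\chi_\pi(\gamma)](T) \in \OO_F\dcroc{T}$. Hence $d\gamma(Y)/dY$ is the reduction modulo the maximal ideal of $\OO_F$ of the formal derivative $[\chi_\pi(\gamma)]'(T)$, and it suffices to prove that $[a]'(T) \equiv \bar a \bmod \pi$ for every $a \in \OO_F$, where $\bar a \in k$ is the image of $a$; this $\bar a$ is precisely the image of $\chi_\pi(\gamma)$ in $k \subset \efont_F$, which is what the congruence $d\gamma(Y)/dY \equiv \chi_\pi(\gamma)$ in $\efont_F$ asserts.

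To do so I would differentiate $\log_{\LT}([a](T)) = a\log_{\LT}(T)$, obtaining $\log'_{\LT}([a](T)) \cdot [a]'(T) = a\log'_{\LT}(T)$, so that
\[ [a]'(T) = a\cdot \frac{\log'_{\LT}(T)}{\log'_{\LT}([a](T))}. \]
The series $\log'_{\LT}(T)$ has coefficients in $\OO_F$ (from the explicit formula for $\log_{\LT}$, using $F \neq \Qp$) and satisfies $\log'_{\LT}(T) \equiv 1 \bmod \pi$; since $[a](T) \in T\OO_F\dcroc{T}$ has zero constant term, substituting it yields $\log'_{\LT}([a](T)) \in 1 + \pi\OO_F\dcroc{T}$, a unit of $\OO_F\dcroc{T}$ congruent to $1$ modulo $\pi$. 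Dividing gives $[a]'(T) \equiv \bar a \cdot 1/1 = \bar a \bmod \pi$, which is the claim. Equivalently, one may phrase the same computation in terms of the invariant differential $\log'_{\LT}(T)\,dT$, which satisfies $[a]^*(\log'_{\LT}(T)\,dT) = a\log'_{\LT}(T)\,dT$ and reduces to $dT$ modulo $\pi$.

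The argument is a short computation and I do not expect a genuine obstacle; the only point that deserves a word of care is that passing to the reduction modulo $\pi$ is legitimate for $\log'_{\LT}(T)$ — in contrast to $\log_{\LT}(T)$ itself, which has denominators — and that the denominator $\log'_{\LT}([a](T))$ appearing above is a $1$-unit of $\OO_F\dcroc{T}$, so no information is lost when reducing.
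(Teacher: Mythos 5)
Your proof is correct and is essentially the paper's argument unwound: the paper works with the invariant derivative $\partial = 1/\log'_{\LT} \cdot d/dT$ and applies the operator identity $\partial \circ \gamma = \chi_\pi(\gamma)\, \gamma \circ \partial$ to $Y$, which after using $\log'_{\LT} \equiv 1 \bmod \pi$ gives the claim; your differentiation of $\log_{\LT}([a](T)) = a\log_{\LT}(T)$ is precisely what establishes that identity, so the two proofs are the same computation phrased differently. One small slip: the parenthetical \og using $F \neq \Qp$\fg{} is attached to the wrong assertion --- the integrality $\log'_{\LT}(T) = \sum_{n\geq 0} (q^n/\pi^n)\,T^{q^n-1} \in \OO_F\dcroc{T}$ holds for every $F$ (since $\val_\pi(q) = [F:\Qp] \geq 1$), whereas it is the congruence $\log'_{\LT}(T) \equiv 1 \bmod \pi$ that requires $\val_\pi(q) \geq 2$, i.e.\ $F \neq \Qp$.
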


\begin{proof}
Since $\log'_{\LT} \equiv 1 \bmod{\pi}$, we have $\partial = d/dY$ in $\efont_F$. Applying $\partial \circ \gamma =  \chi_\pi(\gamma) \gamma \circ \partial$ to $Y$, we get the claim.
\end{proof}

\begin{lemm}
\label{pginv}
If $\gamma \in \Gamma_F$ is nontorsion, then $\efont_F^{\gamma=1} = k$.
\end{lemm}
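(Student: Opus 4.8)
The plan is to prove the contrapositive: if $\efont_F^{\gamma=1}$ strictly contains $k$, then $\gamma$ is torsion. The whole argument is soft; the underlying point is that a single nontorsion automorphism of $\efont_F$ cannot fix a subfield of finite index.

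First I would reduce to a finite-index situation. Suppose $\efont_F^{\gamma=1}$ contains an element $t \notin k$. Since $\efont_F^{\gamma=1}$ is a subfield of $\efont_F = k\dpar{Y}$, after replacing $t$ by $1/t$ if $\val_Y(t) < 0$ and then by $t - t(0)$ if $\val_Y(t) = 0$, I may assume $d := \val_Y(t)$ satisfies $1 \leq d < \infty$. Then $\efont_F = k\dpar{Y}$ is a finite extension of the complete subfield $k\dpar{t}$, of degree $d$: indeed $k\dcroc{Y}$ is free of rank $\val_Y(t)$ over $k\dcroc{t}$ by the structure theory of complete discrete valuation rings. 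Hence $[\efont_F : \efont_F^{\gamma=1}] \leq d < \infty$.

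Next, $\gamma$ is a $k$-automorphism of $\efont_F$ fixing $\efont_F^{\gamma=1}$ pointwise, so it lies in the finite group $\Aut(\efont_F/\efont_F^{\gamma=1})$, whose order is at most $[\efont_F : \efont_F^{\gamma=1}]$. Hence $\gamma^n$ acts as the identity on $\efont_F$ for some $n \geq 1$, and it remains to deduce that $\gamma^n = 1$ in $\Gamma_F$, i.e. that the action of $\Gamma_F$ on $\efont_F$ is faithful. This is part of the field of norms formalism, but one can check it directly. Put $a = \chi_\pi(\gamma^n)$ and suppose $[a](Y) \equiv Y \bmod{\pi}$; then $a \in 1 + \pi\OO_F$, and if $a \neq 1$ we may write $a - 1 = \pi^m v$ with $v \in \OO_F^\times$ and $1 \leq m = \val_\pi(a-1) < \infty$. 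Using the $\OO_F$-module structure, $[a](T) = S(T, [\pi^m]([v](T)))$, where $S$ is the power series giving the addition law of $\LT$. Since $[\pi](T) \equiv T^q \bmod{\pi}$ and $[v](T) \equiv \bar v T + \cdots \bmod{\pi}$ with $\bar v \neq 0$, the series $[\pi^m]([v](T))$ reduces modulo $\pi$ to a power series of $T$-valuation exactly $q^m$, in particular nonzero; and since $S(X,Y) = X + Y + \sum_{i,j \geq 1} s_{i,j} X^i Y^j$, this forces $[a](T) - T \not\equiv 0 \bmod{\pi}$, a contradiction. So $a = 1$, hence $\gamma^n = 1$ (as $\chi_\pi$ is injective), contradicting that $\gamma$ is nontorsion.

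Nothing here is difficult; the only step that deserves attention is the last one, the faithfulness of the $\Gamma_F$-action on $\efont_F$, and the short computation above — or a reference to Fontaine--Wintenberger's theory of the field of norms — settles it.
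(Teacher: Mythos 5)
The paper states this lemma without proof, treating it as a standard consequence of the field-of-norms formalism (faithfulness of the $\Gamma_F$-action on $\efont_F$ is part of Wintenberger's theory). Your argument is a correct, self-contained substitute that avoids invoking that machinery, and it is sound in both of its steps. In step (i), once $t \in \efont_F^{\gamma=1}\setminus k$ is normalized so that $d := \val_Y(t)$ is finite and positive, you conclude $k\dpar{t} \subset \efont_F^{\gamma=1}$ and hence $[\efont_F : \efont_F^{\gamma=1}] \leq d$; this does require observing that $\efont_F^{\gamma=1}$ is a \emph{closed} subfield of $\efont_F$ (so that it contains the completion $k\dpar{t}$ of $k(t)$ and not merely $k(t)$, which would give no degree bound). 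That closedness is immediate because $\gamma$ preserves $\val_Y$ and is therefore continuous, but you should state it, since $[\efont_F:k(t)]$ is infinite. With that in place, $\gamma$ lies in the finite group $\Aut(\efont_F/\efont_F^{\gamma=1})$ and so has finite order as an automorphism of $\efont_F$. In step (ii), the computation showing that $a = \chi_\pi(\gamma^n) = 1+\pi^m v$ with $m\geq 1$ forces $[a](T)-T$ to have a nonzero term of degree exactly $q^m$ modulo $\pi$ is correct: writing $[a](T)=S(T,[\pi^m]([v](T)))$, the inner series has $T$-valuation $q^m$ mod $\pi$, and the cross terms $s_{i,j}T^i(\cdot)^j$ with $i,j\geq 1$ have strictly larger valuation, so the degree-$q^m$ coefficient of $[a](T)-T$ mod $\pi$ is $\bar v^{\,q^m}\neq 0$. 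This gives faithfulness of the $\Gamma_F$-action on $\efont_F$, and combining with (i) contradicts the nontorsion hypothesis. Overall: correct, slightly more elementary than citing Wintenberger, with one tacit point (closedness of the fixed field) that deserves a sentence.
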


\begin{prop}
\label{noltrace}
If $F \neq \Qp$, there is no $\Gamma_F$-equivariant map $R : \efont_F \to \efont_F$ such that $R(\phi_q(f)) = f$ for all $f \in \efont_F$.
\end{prop}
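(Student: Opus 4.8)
The plan is to assume such an $R$ exists and derive a contradiction from the residue map, using Lemma~\ref{ltder} and Lemma~\ref{pginv}. I carry this out for $R$ continuous and $k$-linear, which is the case of interest for ``Tate traces''.

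First I would introduce the residue $\mathrm{res}\colon \efont_F \to k$, sending $f \in k\dpar{Y}$ to $\mathrm{Res}(f\,dY)$, that is, to the coefficient of $Y^{-1}$ in $f$, and record two of its properties. On the one hand, the residue of a differential on a local field is intrinsic, so $\mathrm{Res}(\gamma(\omega)) = \mathrm{Res}(\omega)$ for every differential $\omega$ and every $\gamma \in \Gamma_F$; applying this to $\omega = f\,dY$ and using Lemma~\ref{ltder}, which gives $d(\gamma(Y)) = \chi_\pi(\gamma)\,dY$ in $\efont_F$ (with $\chi_\pi(\gamma)$ now denoting its image in $k^\times$), one obtains $\mathrm{res}(\gamma(f)) = \chi_\pi(\gamma)^{-1}\mathrm{res}(f)$ for all $f$. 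On the other hand $\mathrm{res}\circ\phi_q = 0$, because $\phi_q(f)$ is a Laurent series in $[\pi](Y) = Y^q$ and hence involves only powers $Y^{qn}$. It follows that the residue pairing $\efont_F \times \efont_F \to k$, $(f,g) \mapsto \mathrm{res}(fg)$, which is a perfect pairing of topological $k$-vector spaces, satisfies $\mathrm{res}(\gamma(f)\gamma(g)) = \chi_\pi(\gamma)^{-1}\mathrm{res}(fg)$ since $\gamma$ is a ring homomorphism.

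The main step is then the following. Assuming $R$ exists, the form $\mathrm{res}\circ R \colon \efont_F \to k$ is continuous and $k$-linear, so by perfectness of the pairing there is a unique $g \in \efont_F$ with $(\mathrm{res}\circ R)(f) = \mathrm{res}(gf)$ for all $f$. Equivariance of $R$ gives $(\mathrm{res}\circ R)(\gamma(f)) = \chi_\pi(\gamma)^{-1}(\mathrm{res}\circ R)(f)$; writing $g\,\gamma(f) = \gamma(\gamma^{-1}(g)f)$ and using the transformation law above, this becomes $\mathrm{res}(\gamma^{-1}(g)f) = \mathrm{res}(gf)$ for all $f$, hence $\gamma^{-1}(g) = g$ for all $\gamma \in \Gamma_F$. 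Applying Lemma~\ref{pginv} to a nontorsion $\gamma$ gives $g \in k$, so that $\mathrm{res}\circ R = g\cdot\mathrm{res}$. But then $R\circ\phi_q = \mathrm{id}$ forces $\mathrm{res} = \mathrm{res}\circ R\circ\phi_q = g\cdot(\mathrm{res}\circ\phi_q) = 0$, which contradicts $\mathrm{res}(Y^{-1}) = 1$.

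I expect the delicate point to be the transformation law $\mathrm{res}(\gamma(f)) = \chi_\pi(\gamma)^{-1}\mathrm{res}(f)$, equivalently the fact that $d(\gamma(Y))$ is a \emph{scalar} multiple of $dY$ rather than merely a unit multiple: this is exactly Lemma~\ref{ltder}, whose proof requires a coordinate with $\log'_{\LT} \equiv 1 \bmod \pi$, and such a coordinate exists precisely because $F \neq \Qp$. In the cyclotomic case $d(\gamma(Y))$ is only a unit multiple of $dY$, so $\mathrm{res}\circ R$ need not be proportional to $\mathrm{res}$ and the argument breaks down, as it must since the Tate trace exists there. One can also avoid assuming $R$ continuous at the cost of a more careful analysis, using that $\mathrm{res}\circ R$ kills $\phi_q^2(\efont_F)$.
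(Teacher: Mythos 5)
Your argument is correct as far as it goes, and it takes a genuinely different route from the paper. You dualize: you replace the study of $R$ itself by the study of the linear form $\mathrm{res}\circ R$, use the residue pairing to represent it by an element $g\in\efont_F$, show $g$ is $\Gamma_F$-fixed hence in $k$ by Lemma~\ref{pginv}, and then contradict $R\circ\phi_q=\mathrm{id}$ against $\mathrm{res}\circ\phi_q=0$. The paper instead argues directly in $\efont_F$: granting $R$, it shows that $(1-\gamma)f\in\phi_q(\efont_F)$ forces $f\in\phi_q(\efont_F)$ (again via Lemma~\ref{pginv}), and then exhibits $f=Y^{q/p}$ as a counterexample, using Lemma~\ref{ltder} to see that $\gamma(Y)-Y$ is a series in $Y^p$. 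Both proofs locate the role of $F\neq\Qp$ in Lemma~\ref{ltder}, i.e.\ in the fact that $d\gamma(Y)/dY$ is a \emph{scalar} rather than merely a unit; your formulation via the transformation law of the residue, $\mathrm{res}(\gamma(f))=\chi_\pi(\gamma)^{-1}\mathrm{res}(f)$, makes this particularly transparent and explains cleanly why the argument must fail in the cyclotomic case.

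The price you pay is that your proof needs $R$ to be both $k$-linear and continuous: $k$-linearity so that $\mathrm{res}\circ R$ is $k$-linear and hence representable by the pairing, and continuity so that the representation exists at all. The paper's proof uses only additivity and $\Gamma_F$-equivariance of $R$ (no topology, no $k$-linearity beyond $\mathbf{F}_p$), which is strictly weaker; note in particular that Corollary~\ref{noproj} feeds in $R=T\circ\phi_q^{-1}$, which is only $\phi_q^{-1}$-semilinear over $k$, not $k$-linear, so the extra generality is actually used. Also, your closing remark that one can drop continuity ``using that $\mathrm{res}\circ R$ kills $\phi_q^2(\efont_F)$'' is not convincing as stated: $\phi_q^2(\efont_F)=k\dpar{Y^{q^2}}$ is a closed but not open $k$-subspace of $\efont_F$ (it misses $Y^n$ for $n$ not divisible by $q^2$, however large), so a $k$-linear form vanishing on it need not be continuous, and the duality argument doesn't restart. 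If you want to remove the topological hypothesis you should switch to the paper's direct argument with $Y^{q/p}$.
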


\begin{proof}
Suppose that such a map exists, and take $\gamma \in \Gamma_F$ nontorsion and such that $\chi_\pi(\gamma) \equiv 1 \bmod{\pi}$. We first show that if $f \in \efont_F$ is such that $(1- \gamma) f \in \phi_q(\efont_F)$, then $f \in \phi_q(\efont_F)$. Write $f=f_0 + \phi_q( R(f))$ where $f_0 = f- \phi_q(R(f))$, so that $R(f_0)=0$ and $(1- \gamma) f_0  = \phi_q(g) \in \phi_q(\efont_F)$. Applying $R$, we get $0 = (1- \gamma) R(f_0) = g$. Hence $g=0$ so that $(1- \gamma) f_0 = 0$. Since $\efont_F^{\gamma=1} = k$ by lemma \ref{pginv}, this implies $f_0\in k$, so that $f \in \phi_q(\efont_F)$.

However, lemma \ref{ltder} and the fact that $\chi_\pi(\gamma) \equiv 1 \bmod{\pi}$ imply that $\gamma(Y) = Y + f_\gamma(Y^p)$ for some $f_\gamma \in \efont_F$, so that $\gamma(Y^{q/p}) = Y^{q/p} + \phi_q(g_\gamma)$. Hence $(1-\gamma)(Y^{q/p}) \in \phi_q(\efont_F)$ even though $Y^{q/p}$ does not belong to $\phi_q(\efont_F)$. Therefore, no such map $R$ can exist.
\end{proof}

\begin{coro}
\label{noproj}
If $F \neq \Qp$, there is no $\Gamma_F$-equivariant $k$-linear projector $\phi_q^{-1}(\efont_F) \to \efont_F$. 
A fortiori, there is no $\Gamma_F$-equivariant $k$-linear projector $\et_F \to \efont_F$.
\end{coro}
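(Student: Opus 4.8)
The plan is to deduce the nonexistence of a $\Gamma_F$-equivariant $k$-linear projector from Proposition \ref{noltrace} by a direct contradiction argument. First I would recall that $\phi_q^{-1}(\efont_F)$ is a $\phi_q(\efont_F)$-vector space of dimension $q$ (it is a degree-$q$ purely inseparable extension of $\efont_F$, with $\efont_F = \phi_q(\phi_q^{-1}(\efont_F))$ under the identification $\phi_q : \efont_F \xrightarrow{\sim} \phi_q(\efont_F)$), on which $\Gamma_F$ acts compatibly with its action on $\efont_F$; indeed since $\gamma$ commutes with $\phi_q$, the inclusion $\efont_F \hookrightarrow \phi_q^{-1}(\efont_F)$ is $\Gamma_F$-equivariant, and $\phi_q^{-1}(\efont_F)$ carries the $\Gamma_F$-action induced by $\gamma(f(Y)) = f([\chi_\pi(\gamma)](Y))$ on the completed perfection.

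Next, suppose for contradiction that $P : \phi_q^{-1}(\efont_F) \to \efont_F$ is a $\Gamma_F$-equivariant $k$-linear projector, i.e.\ $P$ restricts to the identity on $\efont_F \subset \phi_q^{-1}(\efont_F)$. I would then set $R = \phi_q^{-1} \circ P \circ \phi_q : \efont_F \to \efont_F$, where the outer $\phi_q^{-1}$ makes sense because $P$ lands in $\efont_F$, and the composite $\phi_q \circ R$ is precisely $P$ restricted to $\phi_q(\efont_F) \subset \phi_q^{-1}(\efont_F)$ followed by nothing, so in particular $R$ is $k$-linear (note $\phi_q$ and $\phi_q^{-1}$ are additive and $k$ is fixed by $\phi_q$ only up to the Frobenius twist — here one must be slightly careful, so I would instead argue directly). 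Concretely: for $f \in \efont_F$, apply $P$ to $\phi_q(f) \in \phi_q^{-1}(\efont_F)$; this is $\Gamma_F$-equivariant, and since $\phi_q(f) \in \efont_F$ already and $P$ is a projector onto $\efont_F$, we get $P(\phi_q(f)) = \phi_q(f)$, which is not what we want. The correct move is to use that $\phi_q^{-1}(\efont_F) \supsetneq \efont_F$ and exhibit $P$ as a retraction; one defines $R(f) := \phi_q^{-1}\big(P(\phi_q^{-1}(f))\big)$ for $f \in \efont_F$, legitimate since $\phi_q^{-1}(f) \in \phi_q^{-1}(\efont_F)$ is in the source of $P$ and $P(\phi_q^{-1}(f)) \in \efont_F$ with $\phi_q^{-1}(\efont_F) \ni \phi_q^{-1}(f)$ — wait, one needs $P(\phi_q^{-1}(f)) \in \phi_q(\efont_F)$ to apply $\phi_q^{-1}$, which need not hold. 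The clean formulation: since $P$ is the identity on $\efont_F$, for $f \in \efont_F$ write $f = \phi_q(h)$ is not always possible, but $\phi_q(f) \in \efont_F$, so $R(f) := P(\phi_q^{-1}(f))$ where $\phi_q^{-1}(f)$ denotes the unique $q$-th root in $\phi_q^{-1}(\efont_F)$; then $R : \efont_F \to \efont_F$ is $\Gamma_F$-equivariant and $k$-linear, and $R(\phi_q(g)) = P(g) = g$ for all $g \in \efont_F$ since $g \in \efont_F$ and $P$ is a projector. This contradicts Proposition \ref{noltrace}.

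Finally, for the ``a fortiori'' statement: any $\Gamma_F$-equivariant $k$-linear projector $\et_F \to \efont_F$ restricts to one on the intermediate field $\phi_q^{-1}(\efont_F)$, since $\efont_F \subseteq \phi_q^{-1}(\efont_F) \subseteq \et_F$ are all stable under $\Gamma_F$ and the restriction of a projector onto $\efont_F$ to any intermediate $\Gamma_F$-stable $k$-subspace containing $\efont_F$ is again such a projector. Thus the nonexistence for $\phi_q^{-1}(\efont_F)$ gives the nonexistence for $\et_F$. The main obstacle here is purely bookkeeping: getting the Frobenius twists straight so that the auxiliary map $R$ one feeds into Proposition \ref{noltrace} is genuinely $k$-\emph{linear} (not just additive and semilinear) and genuinely satisfies $R \circ \phi_q = \mathrm{id}$; once the identifications $\efont_F \xrightarrow{\sim} \phi_q(\efont_F)$ and $\phi_q^{-1}(\efont_F) \xrightarrow{\sim} \efont_F$ are pinned down there is no further content.
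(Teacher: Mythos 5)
Your final formulation $R(f) = P(\phi_q^{-1}(f))$ is exactly the paper's proof (which sets $R = T \circ \phi_q^{-1}$), so after two abandoned false starts you land on the same argument, including the same restriction step for the ``a fortiori'' clause. The worry about Frobenius twists dissolves once you note that $k$ has exactly $q$ elements, so $\phi_q$ (hence $\phi_q^{-1}$) is the identity on $k$ and $R$ is automatically $k$-linear; and in any case the proof of Proposition \ref{noltrace} only uses additivity and $\Gamma_F$-equivariance of $R$, not $k$-linearity.
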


\begin{proof}
Given such a projector $T$, we could define $R$ as in prop \ref{noltrace} by $R = T \circ \phi_q^{-1}$.
\end{proof}

\vspace{\baselineskip}\noindent\textbf{Acknowledgements.}
I thank Juan Esteban Rodr\'{\i}guez Camargo for asking me the question that motivated both this paper and \cite{BR22}.

\providecommand{\bysame}{\leavevmode ---\ }
\providecommand{\og}{``}
\providecommand{\fg}{''}
\providecommand{\smfandname}{\&}
\providecommand{\smfedsname}{\'eds.}
\providecommand{\smfedname}{\'ed.}
\providecommand{\smfmastersthesisname}{M\'emoire}
\providecommand{\smfphdthesisname}{Th\`ese}

\end{document}